\crefname{hypothesis}{Hypothesis}{Hypotheses}
\title{Approximation of  Sums of Locally Dependent Random Variables via Perturbation of Stein Operator\thanks{Submitted to the editors DATE.
\funding{This work was funded by the NNSF of China (12271475, U23A2064, 11731012,) and Fundamental Research Funds for Central Universities grant(2020XZZX002-03).}}}
\author{Zhonggen Su\thanks{School of Mathematical Sciences, Zhejiang University, Hangzhou, 310058,  China.
  (\email{suzhonggen@zju.edu.cn})}
\and Vladimir V. Ulyanov\thanks{Faculty of Computational Mathematics and Cybernetics, Lomonosov Moscow State University, Moscow, 119991, Russia\\ Institute for Financial Studies, Shandong University, Jinan, 250100,  China.
	(\email{vulyanov@cs.msu.ru})}
\and Xiaolin Wang\thanks{School of Mathematical Sciences, Zhejiang University, Hangzhou, 310058,  China.
  (\email{12035020@zju.edu.cn})}}
\begin{document}

\maketitle

\begin{abstract}
Let $(X_{i}, i\in J)$ be a  family of locally dependent nonnegative integer-valued random variables, and consider the sum $W=\sum\nolimits_{i\in J}X_i$. We first establish a  general error upper bound for $d_{TV}(W, M)$ using Stein's method, where the target variable $M$ is either the mixture of Poisson distribution and binomial or negative binomial distribution. As applications, we attain $O(|J|^{-1})$ error bounds  for ($k_{1},k_{2}$)-runs and $k$-runs  under some special cases. Our results are significant improvements of the existing results in literature, say $O(|J|^{-0.5})$ in Pek\"{o}z [Bernoulli, 19 (2013)] and $O(1)$ in Upadhye, et al. [Bernoulli, 23 (2017)].
\end{abstract}

\begin{keywords}
Local  dependence structure, Stein's method, Total variation distance, $(k_{1},k_{2})$-runs
\end{keywords}
\section{Introduction}
Stein \cite{stein1972bound} first introduced a new method, now referred to as Stein's method, to obtain the bound for the departure of the distribution of the sum of $n$ terms of a stationary random sequence from a normal distribution. Soon after,  Chen \cite{chen1975Poisson} further developed  Stein's method to establish a Poisson approximation for dependent trials. Since these seminal papers, there have been considerable developments and many applications in apparently distinct fields.  In fact, The Stein method is suitable for not only normal and Poisson approximations, but also exponential distributions \cite{chatterjee2011exponential,pekoz2011new}, geometric distributions \cite{pekoz1996stein,pekoz2013total}, chi-square distributions \cite{mann1997stein}, general gamma distributions \cite{luk1994stein,pekoz2016generalized}, etc. The interested reader is referred to \cite{arratia1989two,barbour1992Poisson,chen2011normal,ross2011fundamentals,nourdin2012normal} for its history and applications, and \cite{nourdin2012normal,chen2011normal,daly2012stein,ley2013stein,ley2013local,ley2014approximate} and their references for recent progresses.

Based on a number of specific distribution approximations, Ley et al. \cite{ley2014approximate} proposed a canonical Stein operator for both continuous and discrete distributions,  and the resulting Stein identity highlighted the unifying theme behind the literature on Stein's method. Let $Y$ be a random variable with a target distribution. An operator $\mathcal{A}_{Y}$ is called a canonical Stein operator of $Y $ for a function class  $\mathcal{G}$ if   $\mathbf{E}(\mathcal{A}_{Y}f(Y))=0$  for each $f\in \mathcal{G}$.

According to this definition, for $Y\sim N(0, 1)$, $\mathcal{A}_{Y}f(x)=f'(x)-xf(x)$ for any  $f\in \mathcal{G}$ where $\mathcal{G}$ is the set of all bounded differentiable functions on $\mathbf{R}$; while for $Y\sim \mathcal{P}(\lambda)$, $\mathcal{A}_{Y}f(k)= kf(k)-\lambda f(k+1)$ for any  $f\in \mathcal{G}$ where $\mathcal{G}$ is the set of all bounded  functions defined  on $\mathbf{Z}$ with $f(0)=0$.

The purpose of this paper is to establish a sharper approximation of the sum of discrete dependent random variables by a three-parameter distribution which is a small perturbation to Bernoulli and negative binomial distributions.

Let $J$ be a finite subset of $\mathbf{N}$, $ \left\{X_{i}, i \in J\right\} $ a sequence of random variables  taking values in $\mathbf{N}$.  For a given set $A\subset J$,  $X_{A}$ denotes the collection of random variables $\{X_{i}, i \in A\}$,  $X_{A}^{*} := \sum\nolimits_{i \in A}  X_{i}$. As usual, $A^c$ denotes the complement of the set $A$.  Call  $ \left\{X_{i}, i \in J\right\} $  a  local  dependence structure if for any $i\in J$, there exist  $A_i$, $B_i$ and $C_i$ such that $i \in A_{i} \subseteq B_{i} \subseteq C_{i} \subset J$ and  $X_{i}$ is independent of $X_{A_{i}^{c}}$,  $X_{A_{i}}$ is independent of $X_{B_{i}^{c}}$ and $X_{B_{i}}$ is independent of $X_{C_{i}^{c}}$.  This local dependence structure has been well-studied in  literature, say Chen and Shao \cite{chen2004normal}, in which it was referred to as Assumption (LD3). The object of our study is defined by
\begin{align}\label{ss}
W =\sum\nolimits_{i \in J}  X_{i}.
\end{align}
We aim to find as good an approximation as possible for $W$ when $\mathbf{E}W^{3}< \infty$. The target distribution is either $M_1=B(n, p)\ast \mathcal{P}(\lambda)$ or $M_2=NB(r, \bar{p})\ast \mathcal{P}(\lambda)$ with same expectation $\mu$ and variance $\sigma^{2}$ of $W$.  In fact, there has been a lot of research work on approximating   $W$, say  Vellaisamy et al.  \cite{vellaisamy2013negative},   Upadhye et al. \cite{upadhye2017Stein}.

To state our main results, we need some additional notation. Define
\begin{align}\label{theta}
    \theta_{1} &=\frac{\lambda}{\lfloor n+\lambda/p\rfloor q},\; \theta_{2} =\frac{\lambda \bar{q}}{(r\bar{q}+\lambda\bar{p})},\;  \Theta_{1} = \frac{\theta_1 q} { (1-2\theta_{1}) \lambda p }, \; \Theta_{2} = \frac{\theta_2} { (1-2\theta_{2})\lambda \bar{q} },\\\label{10013}
 D&(W|X_{C_i}) =\left\|\mathcal{L}(W|X_{C_i}) *\left(I_{1}-I_{0}\right)^{* 2}\right\|_{\mathrm{TV}},\quad \text{for each}\; i\in J,
\end{align}
where  $I_{1}$ and $I_{0}$ denote the degenerate distributions concentrated at 1 and 0 respectively, and $\mathcal{L}(W|X_{C_i})$ stands for the conditional distribution of $W$ conditioned upon $X_{C_i}$.  Denote
\small$$\Xi_{i,j}=9\sum\nolimits_{(\mathbf{E})}[(1-\beta_{j})\mathbf{E}X_{i}(\mathbf{E})X_{A_{i}}^{*}(\mathbf{E})X_{B_{i}}^{*}(\mathbf{E})X_{C_{i}}^{*}
    +\beta_{j}\mathbf{E}X_{i}(\mathbf{E})X_{B_{i}}^{*}(\mathbf{E})X_{C_{i}}^{*})] \operatorname{ess}D(W\mid X_{C_{i}}),$$
    \normalsize
    where $\beta_{1}=-p/q,$ $\beta_{2}=\bar{q}$, $\operatorname{ess}X$ denotes the essential supremum of $X$ and $\sum\nolimits_{(\mathbf{E})}$ denotes the sum over all possible choices of $\mathbf{E}$ in front of   $X$'s, $(\mathbf{E})X_{i}$ stands for $X_{i}$ or $\mathbf{E}X_{i}$.

We also need to make some basic assumptions about the parameters of the target distributions.

\noindent \textbf{(H1)}: The triple $\{n,p,\lambda\}$ satisfies
$ \varepsilon_{1} := \lambda p/q <p(\lfloor n+\lambda/p\rfloor)/2, q=1-p .$

\noindent\textbf{(H2)}: The triple $\{r,\bar{p},\lambda\}$ satisfies
$\varepsilon_{2} :=\lambda \bar{q} < (r\bar{q}+\lambda \bar{p})/2, \bar{q}=1-\bar{p}.$

Our main results read as follows.
\begin{theorem}\label{main}
With the above notation and assumptions, we have

(i)\;if\; $\mathbf{E}W < \operatorname{Var}W$, then
\begin{align}\label{i}
    d_{T V}(W, M_{2}) \leq \Theta_{2}\sum\nolimits_{i \in J}\Xi_{i,2};
\end{align}

(ii)\;if\; $\mathbf{E}W > \operatorname{Var}W$, then
\begin{align}\label{ii}
    d_{T V}(W, M_{1}) \leq \Theta_{1}\left[\sum\nolimits_{i \in J}\Xi_{i,1} +\frac{\delta p^{2}}{q}\right].
\end{align}
\end{theorem}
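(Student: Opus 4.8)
The plan is to run Stein's method in its usual three stages, organized around the perturbation form of the Stein operator for the target $M_j$. First I would pin down an operator $\mathcal{A}_j$ characterizing $M_j$, written as a base birth--death operator (Poisson-type for the $\mathcal{P}(\lambda)$ factor, binomial- or negative-binomial-type for the other factor) perturbed by a second-difference correction whose weight is governed by $\theta_j$ and $\beta_j$. The role of the three free parameters --- $(n,p,\lambda)$ for $M_1$ and $(r,\bar p,\lambda)$ for $M_2$ --- is to calibrate $M_j$ to $W$ so that their first three factorial cumulants coincide; this matching is precisely what will force the lowest-order terms in the expansion of $\mathbb{E}[\mathcal{A}_j f(W)]$ to cancel, leaving only third-order remainders. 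Since $d_{TV}(W,M_j)=\sup_A|\mathbb{E}[\mathcal{A}_j f_A(W)]|$, where $f_A$ solves the Stein equation $\mathcal{A}_j f_A(k)=\mathbf{1}_A(k)-\mathbb{P}(M_j\in A)$, bounding this supremum is the whole task.

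Second, I would solve the Stein equation and bound the first difference $\Delta f_A$ of the solution. Assumptions (H1) and (H2) are equivalent to $\theta_j<\tfrac12$, so the denominators $(1-2\theta_j)$ in \eqref{c1} stay positive and the perturbation recursion for $f_A$ converges; this yields a uniform estimate $\|\Delta f_A\|_\infty\le\Theta_j$, which is exactly the prefactor appearing in the theorem. This step converts the abstract Stein factor into the explicit constant $\Theta_j$ that I can pull outside the sum.

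Third, and at the heart of the argument, I would expand $\mathbb{E}[\mathcal{A}_j f(W)]$ for a generic solution $f$ using $W=\sum_{i\in J}X_i$. For each $i$ I peel off the summand $X_i$ together with its nested neighborhoods $A_i\subseteq B_i\subseteq C_i$, and exploit the three independence relations of the (LD3) structure to replace $X_i$, $X_{A_i}^*$, $X_{B_i}^*$ by conditionally independent copies. Taylor-expanding $f$ to third order about the relevant partial sum and invoking the agreement of the first three factorial cumulants kills the zeroth-, first-, and second-order contributions termwise, so only third-order remainders survive. Writing each such remainder as $\Delta^2(\Delta f)$ evaluated at $W$, conditioning on $X_{C_i}$, and using the elementary bound $|\mathbb{E}[\Delta^2 g(W)\mid X_{C_i}]|\le\|g\|_\infty\,D(W\mid X_{C_i})$ with $g=\Delta f$, I can dominate every remainder by $\Theta_j$ times a polynomial moment of the local sums $X_{A_i}^*,X_{B_i}^*,X_{C_i}^*$ multiplied by the conditional smoothness $D(W\mid X_{C_i})$. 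Summing over $i$ reproduces $\Theta_j\sum_i\Xi_{i,j}$; in the binomial case the additional $\delta p^2/q$ in part (ii) absorbs the cumulant discrepancy created by the rounding $\lfloor n+\lambda/p\rfloor$.

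The hard part will be the bookkeeping in this third stage: tracking which cross-covariances between $X_i$ and the partial sums over $A_i,B_i,C_i$ appear at each order of the expansion, and verifying that after the cumulant cancellations every surviving term can be routed through $D(W\mid X_{C_i})$ rather than through a cruder uniform bound on $f$. Extracting the exact coefficients in $\Xi_{i,j}$ --- the factor $\tfrac{1}{12}$, the sixes, the signs, and the absolute-value groupings --- demands ordering the expansion so that the smoothing quantity $D$ is pulled out at the right step; a less careful expansion would lose the factor $D$ and with it the $O(|J|^{-1})$ rate claimed in the applications.
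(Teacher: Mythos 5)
Your proposal follows essentially the same route as the paper: the perturbation form of the Stein operator, the bound $\|\Delta f_A\|\le\Theta_j$ under (H1)/(H2) (equivalently $\theta_j<\tfrac12$), the peeling of $X_i$ through the nested neighborhoods $A_i\subseteq B_i\subseteq C_i$ with telescoping to third-order differences, the cancellation of lower-order terms via the three matched factorial cumulants, and the final control through $\|\Delta f\|\,D(W\mid X_{C_i})$, with the $\delta p^2/q$ term arising from rounding the binomial parameter to an integer. The plan is correct and identifies the genuine difficulty (the combinatorial bookkeeping producing the exact coefficients of $\Xi_{i,j}$), which is precisely where the paper's proof spends its effort.
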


Note that $M_{1}$ and $M_{2}$ can be represented as sums of independent
identical random variables. Indeed,   $M_{1}=\sum\nolimits_{1}^{|J|}\xi_{i,1}$ with   $\xi_{i,1}\sim B(n/|J|,p)*\mathcal{P}(\lambda/|J|)$,  and $M_{2}=\sum\nolimits_{1}^{|J|}\xi_{i,2}$ with   $\xi_{i,2}\sim NB(r/|J|,\bar{p})*\mathcal{P}(\lambda/|J|)$. A recent result in Bobkov and Ulyanov \cite{bobkov2022chebyshev} yields a refined central limit theorem.
 Denote by  $\mathbf{\Phi}$ the standard normal distribution function, $\mathbf{\phi}$ the standard normal density function
\begin{align*}
     l_{3,j}&=\frac{1}{\sigma^{3}}\mathbf{E}\mathbf(M_{j}-\mathbf{E}M_{j})^{3},\quad \mathbf{\Phi}_{3,j}(x)=\mathbf{\Phi}(x)-\frac{l_{3}}{6}(x^{2}-1)\mathbf{\phi}(x),\quad  x\in \mathbf{R},\\
     L_{4,j}&=\frac{1}{\sigma^{4}}\sum_{i\in J}\mathbf{E}(\xi_{i,j}-\mathbf{E}\xi_{i,j})^{4},\quad   V_{j}=-\sum_{i=1}^{|J|}\sup_{0\leq t \leq 2\pi}\frac{|\mathbf{E}e^{it\xi_{i,j}}|}{1-\cos t},\quad j=1,2.\end{align*}
Applying Theorem 4.1 in \cite{bobkov2022chebyshev}, we obtain
\begin{align}\label{Com1}
    \sup_{k\in \mathbf{Z}}\left|\mathbf{P}(M_{j}\leq k)-\mathbf{\Phi}_{3,j}(\frac{k+1/2-\mu}{\sigma})\right| \leq \frac{\sigma^{2}L_{4,j}}{V_{j}}.
\end{align}
Combing \eqref{Com1} and Theorem \ref{main} yields
\begin{theorem}\label{Main2}
      Denote $K_{J}=\sup_{i\in J}|A_{i}||B_{i}||C_{i}|$. Assume further that \\$D(W\mid X_{C_{i}})\leq C/\sigma^{2}$. Then for  $j=1, 2$, we have
     \begin{align}\label{TM1}
         \sup_{k\in \mathbf{Z}}\left|\mathbf{P}(W\leq k)-\mathbf{\Phi}_{3,j}(\frac{k+1/2-\mu}{\sigma})\right|\leq C\left[\frac{\Theta_{i}K_{J}\sum\nolimits_{i\in J}\mathbf{E}X_{i}^{3}}{\sigma^{2}}+\frac{\sigma^{2}L_{4,j}}{V_{j}}\right].
     \end{align}
Moreover, if  $\lim_{|J|\rightarrow \infty}K_{J}$ and all $\mathbf{E}X_{i}^{3}$ are bounded by a constant, then
\begin{align}\label{TM2}
    \sup_{k\in \mathbf{Z}}\left|\mathbf{P}(W\leq k)-\mathbf{\Phi}_{3}(\frac{k+1/2-\mu}{\sigma})\right|\leq C\frac{|J|}{\sigma^{4}\wedge (\sigma^{2}\mu)}.
\end{align}
\end{theorem}

The rest of the paper is organized as follows.   Section \ref{S2}   first obtain the Stein operators for   $M_1$ and $M_2$,  and then express them as the perturbation of some classic Stein operators. We also determine the parameters of $M_1$ and $M_2$ by matching their first three factorial cumulants and those of $W$.  In Section \ref{S3}, we prove Theorem \ref{main}.  A key ingredient is to control  $|\mathbf{E}g(W)-\mathbf{E}g(M_{i})|$   by using the third-order difference expansion of $g$.    Section \ref{S4} is devoted to the application of main results to the runs of independent  Bernoulli trials. Sharper upper error bounds are obtained by explicitly computing the factorial cumulants and estimating the measure of smoothness.

\section{Stochastic perturbation tricks}\label{S2}

Let us start with the  Stein operator associated with a specific nonnegative integer-valued random variable.  It is sometimes easy to figure out explicitly. Indeed, let $Y$ be a  nonnegative random variable that takes values in $\mathbf{N}$ with
$\mu_{k}=\mathbf{P}(Y=k)>0.$
Then the stein operator $\mathcal{A}_{Y}$  can be defined as
 $\mathcal{A}_{Y} g(k)=(k+1) \mu_{k+1}g(k+1)/\mu_{k}-k g(k), k \in \mathbf{Z}_{+}$, namely $\mathbf{E}\mathcal{A}_{Y} g(Y)=0$.
Below are three basic examples, which are frequently used throughout the paper.\\
$\bullet$\;\textit{Let $\xi_1 \sim B(n,p)$, $n \in \mathbf{Z}_{+}$, $p \in(0,1)$. Namely,
$$ \mathbf{P}\left(\xi_{1}=k\right)= \binom n k  p^{k} q^{n-k},  k=0,\cdots, n, q=1-p.$$
The Stein operator for $\xi_1$ is $\mathcal{A}_{\xi_1} g(k)=(n - k)p g(k+1)/q- k g(k),\;k\in \mathbf{Z}^{+}.$}\\
$\bullet$\;\textit{Let $\xi_2\sim NB(r, \bar{p})$, $r>0$, $\bar{p}\in (0,1)$. Namely,
$$\mathbf{P}(\xi_2=k)=\binom {r+k-1}  k  \bar{p}^{r} \bar{q}^{k}, k\in \mathbf{N}, \bar{q}=1-\bar{p}.$$
The Stein operator for $\xi_2$ is $\mathcal{A}_{\xi_2} g (k)=\bar{q}(r+k) g(k+1)-k g(k),\; k\in \mathbf{Z}^{+}.$}\\
$\bullet$\;\textit{Fix $N \in (1,+\infty)$ (not necessarily an integer), $0<p<1$. Let $\xi_3 \sim PB(N,p)$  the pseudo-binomial distribution (see  \v{C}ekanavi\v{c}ius and Roos \cite{cekanavicius2004two}, p. 370). Namely,}
$$\mathbf{P}\left(\xi_3=k\right)=\frac{1}{C_{N, p}}\binom N  k  p^{k} q^{N-k}, \quad k=0,\cdots,\lfloor N\rfloor$$
 \textit{where $ q = 1- p$, $C_{N, p}$ is the normalization constant. The Stein operator for $\xi_3$ is}
 \begin{align*}
 \mathcal{A}_{\xi_3} g(k)=(Np/q-p k/q) g(k+1)-k g(k), \quad k=1,2, \cdots,\lfloor N\rfloor.
\end{align*}
As the reader notice,  these three  operators can be expressed in a unified way,
\begin{align}\label{per}
	&\mathcal{A}_{\xi_{i}} g(k)=(\alpha_{i}+\beta_{i} k) g(k+1)-k g(k), \quad k \in \mathbf{Z}_{+},\quad\\
\text{where}\quad\quad&\notag\\
\label{1.99}
   &\alpha_{1}=np/q,\quad \beta_{1}=-p/q;\quad\alpha_{2}=r \bar{q},\quad \beta_{2}=\bar{q};\quad \alpha_{3}=Np/q,\quad \beta_{3}=-p/q.
\end{align}

 Next,  we turn to the Stein operator for the sum of two independent random variables. The basic tool is the probability generating function  approach. Lemma \ref{r} can be shown in a completely parallel way to that of  Proposition 2.2 in \cite{upadhye2017Stein}.

\begin{lemma}\label{r} Fix $n, p, \lambda, r, \bar{p}$. Let $\xi_{i}$ and $\eta$ be two independent random variables, $\eta \sim \mathcal{P}(\lambda)$. Assume further that $\xi_{1}\sim B(n, p)$ or  $\xi_{2}\sim NB(r, \bar{p})$. Then  $M_{i}:=\xi_{i}+\eta$ has a Stein operator of the form
     \begin{align}\label{oper}
		\mathcal{A}_{M_{i}} g(k)=(a_{i}+\beta_{i} k) g(k+1)-k g(k) - \lambda \beta_{i} \Delta g(k+1),\quad i=1,2
	 \end{align}
	 where $a_{i}=\alpha_{i}+\lambda \bar{\beta_{i}}$ and $\alpha_{i}$, $\beta_{i}$ were given by (\ref{1.99}) accordingly.
 \end{lemma}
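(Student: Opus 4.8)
The plan is to route everything through the probability generating function (PGF) and the first-order linear ODE it satisfies. First I would record the dictionary between the unified single-variable operator (\ref{per}) and an ODE for the PGF: testing $\mathbb{E}\mathcal{A}_{\xi_i}g(\xi_i)=0$ against the monomials $g(k)=z^k$ and using $\mathbb{E}[\xi_i z^{\xi_i}]=z\phi_{\xi_i}'(z)$, one obtains after dividing by $z$
\begin{equation*}
(1-\beta_i z)\,\phi_{\xi_i}'(z)=\alpha_i\,\phi_{\xi_i}(z),\qquad 0<z\le 1,
\end{equation*}
which one may cross-check against $\phi_{\xi_1}(z)=(q+pz)^n$ and $\phi_{\xi_2}(z)=\bigl(\bar p/(1-\bar q z)\bigr)^r$. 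Thus the content of (\ref{per}) is exactly this ODE.

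Next I would transport the ODE across the convolution $M_i=\xi_i+\eta$. Writing $\psi:=\phi_{M_i}=\phi_{\xi_i}\phi_\eta$ with $\phi_\eta(z)=e^{\lambda(z-1)}$, so that $\phi_\eta'=\lambda\phi_\eta$, the product rule combined with $(1-\beta_i z)\phi_{\xi_i}'=\alpha_i\phi_{\xi_i}$ gives
\begin{equation*}
(1-\beta_i z)\,\psi'(z)=\bigl(\alpha_i+\lambda(1-\beta_i z)\bigr)\psi(z)=\bigl(\alpha_i+\lambda-\lambda\beta_i z\bigr)\psi(z).
\end{equation*}
The definition $a_i=\alpha_i+\lambda(1-\beta_i)$ is then exactly the one making $a_i+\lambda\beta_i=\alpha_i+\lambda$, which is all that is needed to match the constant term below.

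Finally I would read the operator (\ref{oper}) off this ODE. Testing the candidate $\mathcal{A}_{M_i}$ against $g(k)=z^k$, with $\Delta g(k+1)=z^{k+2}-z^{k+1}$ and $\mathbb{E}[M_i z^{M_i}]=z\psi'$, every term collects into
\begin{equation*}
\mathbb{E}\mathcal{A}_{M_i}g(M_i)=z\bigl[(\beta_i z-1)\psi'(z)+(\alpha_i+\lambda-\lambda\beta_i z)\psi(z)\bigr],
\end{equation*}
where the perturbation summand $-\lambda\beta_i\Delta g(k+1)$ is precisely what supplies the $+\lambda\beta_i z\,\psi-\lambda\beta_i z^2\psi$ contribution needed to turn the bare constant $a_i$ into the affine factor $\alpha_i+\lambda-\lambda\beta_i z$. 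By the ODE the bracket vanishes, so $\mathbb{E}\mathcal{A}_{M_i}g(M_i)=0$ for every monomial. To conclude for the whole test-function class, I would equate coefficients of $z^m$ in the identically-vanishing power series, which yields the Stein identity for each indicator $g=\mathbf 1_{\{m\}}$, and then invoke linearity together with dominated convergence to cover all bounded admissible $g$.

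The main obstacle I anticipate is bookkeeping rather than conceptual: one must align the first-difference perturbation $-\lambda\beta_i\Delta g(k+1)$ with the linear-in-$z$ term $-\lambda\beta_i z$ of the ODE so that the constant offset is absorbed correctly into $a_i$, and one must handle the division by $z$ and the passage from monomials to general $g$ with due care at the boundary $k=0$, in the same sense in which the operator identities in Examples \ref{e1}--\ref{e3} are stated only for $k\ge 1$.
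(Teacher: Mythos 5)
Your proposal is correct and follows essentially the same route as the paper: both arguments run through the PGF identity $\phi_{M_i}=\phi_{\xi_i}\phi_\eta$ and the first-order ODE $(1-\beta_i z)\phi_{M_i}'(z)=(\alpha_i+\lambda-\lambda\beta_i z)\phi_{M_i}(z)$, and then convert that polynomial identity in $z$ into the Stein identity by matching coefficients (the paper phrases this as a three-term recursion on the point masses $\mu_k$ which is then multiplied by $g(k+1)$ and summed, which is the same bookkeeping as your monomial test plus coefficient extraction). Your unified $(\alpha_i,\beta_i)$ formulation and the observation that $a_i+\lambda\beta_i=\alpha_i+\lambda$ is exactly the paper's computation in disguise, so no gap.
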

Comparing (\ref{per}) and  (\ref{oper}), it is easy to give the Stein operators of $M_{1}$ and $M_{2}$.
\begin{proposition}Let $\mathcal{U}_{1}(g)(k)=\lambda \Delta g(k+1)p/q$ and $\mathcal{U}_{2}(g)(k)=-\lambda \bar{q} \Delta g(k+1)$.

(1) Fix $n\ge 1, 0<p<1, \lambda>0$. Let $N= n+\lambda/p$, $\zeta_{1}\sim PB(N, p)$, then
\begin{align}\label{PoPB}
\mathcal{A}_{M_{1}}=\mathcal{A}_{\zeta_{1}}+ \mathcal{U}_{1},
\end{align}

(2) Fix $ r>0, 0<\bar{p}<1, \lambda>0$. Let  $\zeta_{2}\sim NB(r+\lambda \bar{p}/\bar{q}, \bar{p})$, then
\begin{align} \label{PoNB}
\mathcal{A}_{M_{2}}=\mathcal{A}_{\zeta_{2}} +\mathcal{U}_{2}.
\end{align}
\end{proposition}

Having the Stein operator $\mathcal{A}_{M_{i}}$, we take a close look at  the properties of solution $g_{f}$ to the following equation
\begin{align}\label{so}
    \mathcal{A}_{M_{i}}g(k)=f(k)-\mathbf{E}f(M_{i}), \quad f \in \mathcal{G}.
\end{align}
It follows from Lemma 2.2 in \cite{barbour1987asymptotic},  Lemma 9.2.1 in \cite{barbour1992Poisson} and (57) of \cite{cekanavicius2004two}  that
\begin{align}\label{2.3}
     \|\Delta g_{f}^{\xi_{1}}\| \leq \frac{2\|f\|}{np},\quad  \|\Delta g_{f}^{\xi_{2}}\| \leq \frac{2\|f\|}{r \bar{q}}\quad\text{and}\quad \|\Delta g_{f}^{\xi_{3}}\| \leq  \frac{2\|f\|}{\lfloor N\rfloor p},
\end{align}
 where $g_{f}^{\xi_{i}}$ is the solution to  the corresponding Stein equation.

In our context, we are mainly concerned with the Stein operators in (\ref{PoPB}) and (\ref{PoNB}). Note $\|\mathcal{U}_{1} g\| \leq \varepsilon_{1}\|\Delta g\|$ and $\|\mathcal{U}_{2} g\| \leq \varepsilon_{2}\|\Delta g\|$
   where  $\varepsilon_{1} =  \lambda p/q$ and  $\varepsilon_{2}= \lambda \bar{q} $. The following lemma due to Barbour et  al.\cite{barbour2007Stein} offers an upper bound for $\|\Delta g^{M_{i}}_{h}\|$ which is the solution of (\ref{so}) when we restrict the domain of Stein operator $\mathcal{A}_{M_{i}}$  to $\mathcal{H}$, the set of indicator functions.

\begin{lemma}\label{lemma}With \textbf{(H1)} and \textbf{(H2)}, we have
\begin{align*}
	\|\Delta g_{h}^{M_{1}}\| \leq \frac{1}{ \lfloor n+\frac{\lambda}{p}\rfloor p -2\varepsilon_{1}},\;h \in \mathcal{H}; \quad  \|\Delta g_{h}^{M_{2}} \|\leq \frac{1}{r\bar{q}+\lambda\bar{p} -2\varepsilon_{2}},\; h \in \mathcal{H}.
  \end{align*}
\end{lemma}
The proof is similar to Lemma 3.1 of \cite{upadhye2017Stein} with some minor modifications. The interested reader is referred to  \cite{barbour2007Stein} for a general framework.

Next turn to  the primary object of study, $d_{TV}(W, M_{i}), i=1,2.$ Note that
$$ d_{TV}(W, M_{i})= \sup_{h\in \mathcal{H}} \left|\mathbf{E}\left(\mathcal{A}_{M_{i}}g_{h}^{M_{i}}\right)(W)\right|.$$
Applying Lemma \ref{lemma}, we have
\begin{proposition}\label{lemmaWM}
 \noindent
	(i)\; Assume \textbf{(H1)}  and $\left|\mathbf{E}\left(\mathcal{A}_{M_{1}} g_{h}^{M_{1}}\right)(W)\right| \leq \varepsilon \|\Delta g_{h}^{M_{1}}\|$, then
	\begin{align}\label{999}
    d_{T V}(W, M_{1}) \leq \frac{\varepsilon q}{\lfloor n+\lambda/p\rfloor pq-2 \lambda p}.
\end{align}
  (ii)\; Assume \textbf{(H2)} and
$\left|\mathbf{E}\left(\mathcal{A}_{M_{2}} g_{h}^{M_{2}}\right)(W)\right| \leq \varepsilon \|\Delta g_{h}^{M_{2}}\|$, then
   \begin{align}\label{888}
    d_{T V}(W, M_{2}) \leq \frac{\varepsilon }{(r\bar{q}+\lambda\bar{p})-2 \lambda \bar{q}}.
\end{align}
\end{proposition}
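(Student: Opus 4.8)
The plan is to assemble the proposition directly from the three ingredients already laid out just above its statement: the total-variation representation $d_{TV}(W,M_i)=\sup_{h\in\mathcal{H}}|\mathbb{E}(\mathcal{A}_{M_i}g_h^{M_i})(W)|$, the standing hypothesis $|\mathbb{E}(\mathcal{A}_{M_i}g_h^{M_i})(W)|\le\varepsilon\|\Delta g_h^{M_i}\|$, and the smoothness bounds of Lemma \ref{lemma}. No new analytic estimate is required; the work is purely in chaining the inequalities and carrying out the algebraic simplification so that the final constants take the advertised form.

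First I would treat case (i). Fixing $h\in\mathcal{H}$, applying the hypothesis and then taking the supremum over $h$ in the total-variation representation gives $d_{TV}(W,M_1)\le\varepsilon\sup_{h\in\mathcal{H}}\|\Delta g_h^{M_1}\|$; this supremum is the legitimate test class since $d_{TV}$ is exactly the largest discrepancy over indicator sets, which is the content of $\mathcal{H}$. I would then invoke part (i) of Lemma \ref{lemma}, which bounds each $\|\Delta g_h^{M_1}\|$ by $1/(\lfloor n+\lambda/p\rfloor p-2\varepsilon_1)$ uniformly in $h$, so that
\begin{equation*}
d_{TV}(W,M_1)\le\frac{\varepsilon}{\lfloor n+\lambda/p\rfloor p-2\varepsilon_1}.
\end{equation*}
The last step is to substitute the explicit value $\varepsilon_1=\lambda p/q$ recorded before Lemma \ref{lemma} and clear the fraction: multiplying numerator and denominator by $q$ turns $\lfloor n+\lambda/p\rfloor p-2\lambda p/q$ into $\lfloor n+\lambda/p\rfloor pq-2\lambda p$, producing exactly the bound (\ref{999}). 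Case (ii) is entirely parallel: the same chain with part (ii) of Lemma \ref{lemma} yields $d_{TV}(W,M_2)\le\varepsilon/(r\bar{q}+\lambda\bar{p}-2\varepsilon_2)$, and substituting $\varepsilon_2=\lambda\bar{q}$ gives (\ref{888}) with no further rearrangement.

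There is essentially no hard step here; the only point that deserves a remark is that the denominators must be genuinely positive, so that the bounds are finite and nontrivial. This is precisely what the standing assumptions provide: \textbf{(H1)} reads $\varepsilon_1<\tfrac{p}{2}\lfloor n+\lambda/p\rfloor$, equivalent to $\lfloor n+\lambda/p\rfloor p-2\varepsilon_1>0$, while \textbf{(H2)} reads $\varepsilon_2<\tfrac{1}{2}(r\bar{q}+\lambda\bar{p})$, equivalent to $r\bar{q}+\lambda\bar{p}-2\varepsilon_2>0$. Thus the two assumptions are exactly what is needed to render Lemma \ref{lemma} applicable and the resulting constants meaningful, and the proposition follows.
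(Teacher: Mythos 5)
Your proposal is correct and coincides with the paper's own (implicit) proof, which likewise chains the representation $d_{TV}(W,M_i)=\sup_{h\in\mathcal{H}}|\mathbb{E}(\mathcal{A}_{M_i}g_h^{M_i})(W)|$ with the hypothesis and Lemma \ref{lemma}, then substitutes $\varepsilon_1=\lambda p/q$ and $\varepsilon_2=\lambda\bar q$. Your added remark that \textbf{(H1)} and \textbf{(H2)} are exactly the positivity of the denominators is accurate and consistent with the paper.
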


 To conclude this section, we would like briefly explain  how to decide  the parameters ($\lambda, n, p$) and  ($\lambda, r, \bar{p}$). Its basic principle is as follows.

  Denote by  $\Gamma_{j}$ the $j$-th factorial  cumulant, namely,
    $ \Gamma_{j}(\cdot)=d^{j} (\log \phi_{\cdot}(z)|_{z=1})/d z^{j}$, where $\phi_{X}(z)$ is the generating function of some random variable $X$. Then by some basic algebra, we obtain
\begin{align*}
\Gamma_{1}(M_{i})= \frac{\alpha_{i}}{1-\beta_{i}}+\lambda,\quad \Gamma_{2}(M_{i})= \frac{\alpha_{i}\beta_{i}}{(1-\beta_{i})^{2}},\quad \Gamma_{3}(M_{i})=2\frac{\alpha_{i}\beta_{i}^{2} }{(1-\beta_{i})^{3}}, \quad i=1,2.
\end{align*}
Besides, it is well known that $ \Gamma_{1}(W)=m_{1}, \Gamma_{2}(W)=\left(m_{2}-m_{1}^{2}-m_{1}\right), \Gamma_{3}(W)=m_{3}-3m_{1}m_{2}+2m_{1}^{3}-3m_{2}+3m_{1}^{2}+2m_{1},$ where $m_{j}$ be the $j$-th origin moment of $W$.

The basic principle is to guarantee the first three moments of both  $W$ and  $M_{i}$ match each other, namely, that is
\begin{align}\label{1.10}
   \Gamma_{1}(M_{i})=\Gamma_{1}(W),\quad\Gamma_{2}(M_{i})=\Gamma_{2}(W),\quad\Gamma_{3}(M_{i})=\Gamma_{3}(W),\quad i=1,2.
\end{align}
In particular,  the parameters $\{n, p, \lambda\}$ in $M_1$ and  $\{r, \bar{p}, \lambda\}$ in $M_2$ are delicately chosen to  satisfy the requirments
\begin{align}\label{111}
     n&=\lfloor\frac{{4\Gamma_{2}(W)}^{3}}{{\Gamma_{3}(W)}^{2}}\rfloor, \quad p=-\frac{\Gamma_{3}(W)}{2\Gamma_{2}(W)},\quad\lambda=\mathbb{E}W-np,\quad \delta=\frac{{4\Gamma_{2}(W)}^{3}}{{\Gamma_{3}(W)}^{2}}-n;\\\label{111'}
     r&=\frac{4\Gamma_{2}(W)^{3}}{\Gamma_{3}(W)^{2}},  \quad\quad p = \frac{2\Gamma_{2}(W)}{2\Gamma_{2}(W) +\Gamma_{3}(W)}, \quad\quad \lambda =\mu - \frac{rq}{p}.
\end{align}
\section{Proofs of Main Results}\label{S3}

\begin{proof}[Proof of Theorem \ref{main}]
Denote $\bar{\beta_{1}}=1-\beta_{1}$ and $\bar{\beta_{2}}=1-\beta_{2}$. Let us begin with the proof of (i). From Lemma \ref{r}, we have
$$\mathcal{A}_{M_{2}} g(k)=(a_{2}+\beta_{2} k) g(k+1)-k g(k) - \lambda \beta_{2} \Delta g(k+1),$$
where $a_{2}=\alpha_{2}+\lambda \bar{\beta_{2}}.$ Taking expectation with respect to $W$ yields
\begin{align*}
\mathbf{E}\mathcal{A}_{M_{2}} g(W)&= \mathbf{E}(a_{2}+\beta_{2} W)g(W+1)-\mathbf{E}W g(W) - \lambda \beta_{2} \mathbf{E}\Delta g(W+1) \\
     =&\bar{\beta_{2}}\left[\frac{a}{\bar{\beta_{2}}} \mathbf{E}[g(W+1)]-\mathbf{E}[W g(W)]\right]+\beta_{2} \mathbf{E}[W \Delta g(W)] \notag\\
&- \lambda \beta_{2} \mathbf{E}\Delta g(W+1).\notag
\end{align*}
Since $a_{2}/\bar{\beta_{2}} = \mathbf{E}W=\sum\nolimits_{i \in J}  \mathbf{E} X_{i}$, then
\begin{align*}
   \mathbf{E}\left[\mathcal{A}_{M_{2}} g(W)\right]&=\bar{\beta_{2}}\Big[\sum\nolimits_{i \in J}  \mathbf{E} X_{i} \mathbf{E}[g(W+1)]-\sum\nolimits_{i \in J}  \mathbf{E}\left[X_{i} g(W)\right]\Big]\\
  &+ \beta_{2}\mathbf{E}[W \Delta g(W)] - \lambda \beta_{2} \mathbf{E}\Delta g(W+1).\notag
\end{align*}
Set
$$W_{i} =W-X_{A_{i}}^{*},\quad W_{i}^{*} =W-X_{B_{i}}^{*},\quad W_{i}^{**} =W-X_{C_{i}}^{*}.$$
Using the independence of  $X_{i}$ and $W_{i}$, we obtain
\begin{align}\label{132}
  \quad\mathbf{E}\left[\mathcal{A}_{M_{2}} g(W)\right]&=\bar{\beta_{2}} \sum\nolimits_{i \in J} \mathbf{E} X_{i} \mathbf{E}\left[\sum\nolimits_{j=1}^{X_{A_{i}}^{*}} \Delta g\left(W_{i}+j\right)\right]- \lambda \beta_{2} \mathbf{E}\Delta g(W+1)\notag \\
  &-\bar{\beta_{2}} \sum\nolimits_{i \in J}  \mathbf{E}\left[X_i \sum\nolimits_{j=1}^{X_{A_{i}}^{*}-1} \Delta g\left(W_{i}+j\right)\right]+\beta_{2} \sum\nolimits_{i \in J}  \mathbf{E}\left[X_{i} \Delta g(W)\right].
\end{align}
 A crucial observation is
\begin{align}\label{0}
& \bar{\beta_{2}}\left\{\sum\nolimits_{i \in J}  \mathbf{E} X_{i}  \mathbf{E}\left[\sum\nolimits_{j=1}^{X_{A_{i}}^{*}} 1\right]-\sum\nolimits_{i \in J}  \mathbf{E}\left[X_{i} \sum\nolimits_{j=1}^{X_{A_{i}}^{*}-1} 1\right]\right\}+\beta_{2}\sum\nolimits_{i \in J}  \mathbf{E} X_{i}\\
=&\sum\nolimits_{i \in J}  \mathbf{E} X_{i}-\bar{\beta_{2}}\sum\nolimits_{i \in J,j \in A_{i}} \left(\mathbf{E}X_{i} X_{j}-\mathbf{E} X_{i} \mathbf{E} X_{j}\right) = \mathbf{E}W-\bar{\beta_{2}}\operatorname{Var}W.\notag
\end{align}
Note that for $M_{2}$,
$\mathbf{E}W =\alpha_{2}/\bar{\beta_{2}}+\lambda, \operatorname{Var}W = \alpha_{2}/\bar{\beta_{2}}^{2}+\lambda.$ So (\ref{0}) is equal to say
\begin{align}\label{1}
     \lambda \beta_{2} = \bar{\beta_{2}}\Big\{\sum\nolimits_{i \in J}  \mathbf{E} X_{i} \mathbf{E}\left[X_{A_{i}}^{*}\right]-\sum\nolimits_{i \in J}  \mathbf{E}\left[X_{i}\left(X_{A_{i}}^{*}-1\right)\right]\Big\}+\beta_{2} \sum\nolimits_{i \in J}  \mathbf{E} X_{i}.
\end{align}
Substituting (\ref{1}) into (\ref{132}),  noting the independence of $X_{A_{i}}^{*}$ and $W_{i}^{*}$ and  the following elementary second-order  difference identity,
  \begin{align}\label{cha}
    \Delta g\left(W_{i}+m\right)-\Delta g\left(W_{i}^{*}+1\right) = \sum\nolimits_{\ell=1}^{X_{B_{i}\backslash A_{i}}^{*} +m-1} \Delta^{2} g\left(W_{i}^{*}+\ell\right),\quad m\in \mathbf{Z},
  \end{align} we obtain
  \begin{align}
  \mathbf{E}\left[\mathcal{A}_{M_{2}} g(W)\right] &= -\sum\nolimits_{i \in J} \Big\{\bar{\beta_{2}} \left[\mathbf{E}X_{i} \mathbf{E}X_{A_{i}}^{*}-\mathbf{E}X _ { i }( X_{A_{i}}^{*}-1)\right]+ \beta_{2}\mathbf{E}X_{i}\Big\}
  \notag\\
&\times\mathbf{E}\left[\sum\nolimits_{\ell=1}^{X_{B_{i}}^{*}} \Delta^{2} g\left(W_{i}^{*}+\ell\right)\right]+ \beta_{2}\sum\nolimits_{i \in J}  \mathbf{E}\left[X_{i} \sum\nolimits_{\ell=1}^{X_{B_{i}}^{*}-1} \Delta^{2} g\left(W_{i}^{*}+\ell\right)\right] \notag\\
&+\bar{\beta_{2}} \sum\nolimits_{i \in J}  \mathbf{E} X_{i} \mathbf{E}\left[\sum\nolimits_{j=1}^{X_{A_{i}}^{*}} \sum\nolimits_{\ell=1}^{X_{B_{i}\backslash A_{i}}^{*} +j-1} \Delta^{2} g\left(W_{i}^{*}+\ell\right)\right] \notag\\
  &-\bar{\beta_{2}} \sum\nolimits_{i \in J}  \mathbf{E}\left[X_{i} \sum\nolimits_{j=1}^{X_{A_{i}}^{*}-1} \sum\nolimits_{\ell=1}^{X_{B_{i}\backslash A_{i}}^{*} +j-1} \Delta^{2} g\left(W_{i}^{*}+\ell\right)\right]:=\Upsilon_{2}. \notag
\end{align}
\normalsize
A {\it nice} coincidence is that $\Upsilon_{2}$ above with  $\Delta^{2} g\left(W_{i}^{*}+\ell \right)$ replaced by 1 vanishes. For clarity, we state it as a lemma.

\begin{lemma} \label{coinlm}Let
\begin{align*}
\Upsilon_{20}:=&\sum\nolimits_{i \in J}\Big\{\bar{\beta_{2}}\mathbf{E} X_{i} \mathbf{E}(X_{A_{i}}^{*}(X_{B_{i}\backslash A_{i}}^{*} +j-1))
  -   \mathbf{E} [X_{i} (X_{A_{i}}^{*}-1)(X_{B_{i}\backslash A_{i}}^{*} +j-1) ]+\notag\\
  &\beta_{2}  \mathbf{E} X_{i}  (X_{B_{i}}^{*}-1)  -\Big\{\bar{\beta_{2}}  \left[\mathbf{E}X_{i} \mathbf{E}X_{A_{i}}^{*}-\mathbf{E}X _ { i }( X_{A_{i}}^{*}-1)\right]+ \beta_{2}\mathbf{E}X_{i}\Big\} \mathbf{E} X_{B_{i}}^{*}\Big\},
\end{align*}
then $\Upsilon_{20}=0$
\end{lemma}
\begin{proof}

 it is easy to see
 \begin{align}  \label{ca}
 \Upsilon_{20} = \sum\nolimits_{i \in J}&\left\{[\mathbf{E}X_{i}\mathbf{E}X_{A_{i}}^{*}X_{B_{i}}^{*} - \mathbf{E}X_{i}X_{A_{i}}^{*}X_{B_{i}}^{*} - \mathbf{E}X_{i}\mathbf{E}X_{A_{i}}^{*}\mathbf{E}X_{B_{i}}^{*} + \mathbf{E}X_{i}X_{A_{i}}^{*}\mathbf{E}X_{B_{i}}^{*}\right.\notag\\
  &  \left.+\frac{1}{2}\left(\mathbf{E}X_{i}(X_{A_{i}}^{*})^{2}-\mathbf{E}X_{i}(\mathbf{E}(X_{A_{i}}^{*})^{2}+  \mathbf{E}X_{i}X_{A_{i}}^{*} - \mathbf{E}X_{i}\mathbf{E}X_{A_{i}}^{*}\right)]\bar{\beta_{2}}\right.
  \notag\\
  &  \left.+\mathbf{E}X_{i}X_{B_{i}}^{*}-\mathbf{E}X_{i}\mathbf{E}X_{B_{i}}^{*}-\mathbf{E}X_{i} \right\}.
\end{align}
\normalsize
By local independence, we have the following two equality:
\begin{align}\label{zx}
   \sum\nolimits_{i \in J}(\mathbf{E}X_{i}X_{B_{i}}^{*}-\mathbf{E}X_{i}\mathbf{E}X_{B_{i}}^{*})=\sum\nolimits_{i \in J}(\mathbf{E}X_{i}X_{A_{i}}^{*}-\mathbf{E}X_{i}\mathbf{E}X_{A_{i}}^{*}) = m_{2}-m_{1}^{2};
\end{align}
\begin{align}\label{xz}
&\mathbf{E}X_{i}\mathbf{E}X_{A_{i}}^{*}W - \mathbf{E}X_{i}X_{A_{i}}^{*}W - \mathbf{E}X_{i}\mathbf{E}X_{A_{i}}^{*}\mathbf{E}W + \mathbf{E}X_{i}X_{A_{i}}^{*}\mathbf{E}W\\
&= \mathbf{E}X_{i}\mathbf{E}X_{A_{i}}^{*}X_{B_{i}}^{*} - \mathbf{E}X_{i}X_{A_{i}}^{*}X_{B_{i}}^{*} - \mathbf{E}X_{i}\mathbf{E}X_{A_{i}}^{*}\mathbf{E}X_{B_{i}}^{*} + \mathbf{E}X_{i}X_{A_{i}}^{*}\mathbf{E}X_{B_{i}}^{*}.\notag
\end{align}
Applying (\ref{zx}) and (\ref{xz}), (\ref{ca}) becomes
\begin{align}\label{12345}
&\frac{\bar{\beta_{2}}}{2}\sum\nolimits_{i \in J}[\mathbf{E}X_{i}(X_{A_{i}}^{*})^{2}-\mathbf{E}X_{i}\mathbf{E}(X_{A_{i}}^{*})^{2}+2(\mathbf{E}X_{i}\mathbf{E}X_{A_{i}}^{*}W - \mathbf{E}X_{i}X_{A_{i}}^{*}W)]\\
 & +(\frac{\bar{\beta_{2}}}{2}+1)(m_{2}-m_{1}^{2})-m_{1} + \bar{\beta_{2}}(m_{2}-m_{1}^{2})m_{1}.\notag
\end{align}
In addition, noting $W=X_{A_{i}}^{*}+X_{A_{i}^{c}}^{*}$, it follows
\begin{align}\label{10}
&\mathbf{E}X_{i}(X_{A_{i}}^{*})^{2}-\mathbf{E}X_{i}\mathbf{E}(X_{A_{i}}^{*})^{2}+2(\mathbf{E}X_{i}\mathbf{E}X_{A_{i}}^{*}W - \mathbf{E}X_{i}X_{A_{i}}^{*}W) \\
   = &-\mathbf{E}X_{i}(X_{A_{i}}^{*})^{2}+\mathbf{E}X_{i}\mathbf{E}(X_{A_{i}}^{*})^{2}- 2\mathbf{E}X_{i}X_{A_{i}}^{*}X_{A_{i}^{c}}^{*}+2\mathbf{E}X_{i}\mathbf{E}X_{A_{i}}^{*}X_{A_{i}^{c}}^{*}\notag\\
   =  &-\mathbf{E}X_{i}W^{2}+\mathbf{E}X_{i}\mathbf{E}W^{2},\notag
\end{align}
where we used the independence between $X_{i}$ and $X_{A_{i}^{c}}^{*}$ in the last equation.

Combining equations (\ref{12345}) and (\ref{10}), we end up with that (\ref{ca}) becomes
\begin{align}\label{3160}
(\bar{\beta_{2}}2/+1)(m_{2}-m_{1}^{2})-m_{1} + \bar{\beta_{2}}(m_{2}-m_{1}^{2})m_{1} - \bar{\beta_{2}}(m_{3}-m_{1}m_{2})/2.
\end{align}
Solving (\ref{1.10}) yields
\begin{align}\label{3.4}
\beta_{2}=\frac{m_{3}-3m_{1}m_{2}-3m_{2}+2m_{1}^{3}+3m_{1}^{2}+2m_{1}}{m_{3}-3m_{1}m_{2}-m_{2}+2m_{1}^{3}+m_{1}^{2}}.
\end{align}
Substituting (\ref{3.4}) into (\ref{3160}), we conclude  that  $\Upsilon_{20}=0$, as desired.
\end{proof}

Proceed with the proof of Theorem \ref{main}. Applying Lemma \ref{coinlm}, noting the independence of $X_{B_{i}}^{*}$ and $W_{i}^{**}$ and the following third-order  difference identity
  $$\Delta^{2} g\left(W_{i}^{*}+m\right)-\Delta^{2} g\left(W_{i}^{**}+1\right) = \sum\nolimits_{k=1}^{X_{C_{i}\backslash B_{i}}^{*} +m-1} \Delta^{3} g\left(W_{i}^{*}+k\right),\quad m\in \mathbf{Z},$$ some simple algebra lead to
  \begin{align*}
   \left|\mathbf{E}\mathcal{A}_{M_{2}} g(W)\right| \leq& \sum\nolimits_{i \in J}\left[\sup_{k\in \mathbf{Z}^{+}}\operatorname{ess}\mathbf{E}[\Delta^{3}g(W+k)\mid X_{C_{i}}]\right]\\
   &\times \sum\nolimits_{(\mathbf{E})}[\bar{\beta_{j}}\mathbf{E}X_{i}(\mathbf{E})X_{A_{i}}^{*}(\mathbf{E})X_{B_{i}}^{*}(\mathbf{E})X_{C_{i}}^{*}
    +\beta_{j}\mathbf{E}X_{i}(\mathbf{E})X_{B_{i}}^{*}(\mathbf{E})X_{C_{i}}^{*})].
\end{align*}

It is easy to verify that $\mathbf{E}[\Delta^{3}g(W+k)\mid X_{C_{i}}] \leq \|\Delta g\| D(W\mid X_{C_{i}}),$
where  $D(W\mid X_{C_{i}})$ was defined in (\ref{10013}).
Using the definition of    $\Xi_{i,2}$ and some apparent comparsion, the last inequation becomes $\left|\mathbf{E}\mathcal{A}_{M_{2}} g(W)\right| \leq \|\Delta g\|\sum\nolimits_{i \in J}\Xi_{i,2},$ which,  together with  (\ref{888}) implies  (\ref{i}) holds.

Turn to the proof of (ii). Following the proof of (i),
 the R.H.S of (\ref{0}) is equal to
$\mathbf{E}W - \bar{\beta_{1}}\operatorname{Var}W=\lambda\beta_{1}+ \delta \beta_{1}^{2}/\bar{\beta_{1}}.$ Thus we immediately have
\begin{align}\label{wxl}
    \mathbf{E}\mathcal{A}_{M_{1}} g(W) \leq \Upsilon_{1} +\frac{ \delta \beta_{1}^{2}}{ \bar{\beta_{1}} }  \mathbf{E}\Delta g(W+1),
\end{align}
where $\Upsilon_{1}$ is obtained by replacing $\beta_{2}$ with $\beta_{1}$ from $\Upsilon_{2}.
$ Noting (\ref{1.10}), an analog of  the proof of (\ref{i})  yields
\begin{align}\label{322}
    \left|\mathbf{E}\mathcal{A}_{M_{1}} g(W)\right| \leq \|\Delta g\|\left(\sum\nolimits_{i\in J}\Xi_{i,1}  + \frac{\delta p^{2}}q\right) .
\end{align}
Finally, using (\ref{999}) and (\ref{322}), the proof is completed.
\end{proof}
\section{Applications}\label{S4}
\subsection{$(k_{1}, k_{2})$-runs}
Fix $k_{1}, k_{2} \geq 1$, let $m=k_{1}+k_{2}-1$ and $J =\{1,2,\cdots,Nm\}$. Suppose that $\xi_{1}, \xi_{2}, \cdots ., \xi_{Nm}$ are a sequence of independent Bernoulli random variables with $\mathbf{P}(\xi_{j}=1)=p$ for $j\in J$.
 We say that a $(k_{1}, k_{2})$-event  if there occurs $k_{1}$ consecutive $0'$s followed by $k_{2}$ consecutive $1'$s. To avoid edge effects, we identify $i + Nmj$ as $i$    for $ i \in J, j\in \mathbf{Z}$\;. Define
$$X_{j}=\left(1-\xi_{j}\right) \cdots\left(1-\xi_{j+k_{1}-1}\right) \xi_{j+k_{1}}\cdots \xi_{j+k_{1}+k_{2}-1}.$$
It is easy to see that the  local dependence structure is satisfied with
\begin{align*}
A_{i} &=\{j\in J:|j-i| \leq m\};  B_{i} =\{j:|j-i| \leq 2m\}; C_{i} =\{j:|j-i| \leq 3m\}.
\end{align*}
Denote by $W_{N}=\sum\nolimits_{j=1}^{Nm}X_{j}$ the number of occurrences of $(k_{1},k_{2})$-events in $Nm$ trials, which is often called a modified binomial distribution. For ease of notation, we henceforth suppress the dependence of quantities on $N$ when it is clear from the context. \\
Let $b=\mathbf{E}X_{j}=(1-p)^{k_{1}}p^{k_{2}}.$ It follows from some simple but tedious calculations,
\begin{align*}
  \Gamma_{1}(W)=Nmb,\quad\Gamma_{2}(W)=-Nm(2m+1)b^{2},\quad
  \Gamma_{3}(W)=Nm[9m^{2}+9m+2]b^{3}.
\end{align*}

Since  $\Gamma_{2}(W)$  is negative, we use $M_1=B(n,p)*\mathcal{P}(\lambda)$ to approximate $W$ by matching the first three moments.
Following the argument in Section \ref{S3}, we get
\begin{align}\label{nmd}
 n=\lfloor \frac{Nm(2m+1)^{3}}{[(2m+1)^{2}+\frac{m(m+1)}{2}]^{2}}\rfloor,\; p=\left[2m+1+\frac{m(m+1)}{2(2m+1)}\right]b, \;\lambda = Nmb - np.
\end{align}

\begin{theorem}\label{T42}
Let $m\geq 1$ and $\left\{p_{i},i\in J\right\}$ are identical to $p$, assume
\begin{align}\label{726}
b:=\left(1-p\right)^{k_{1}}p^{k_{2}} < \frac{2(2m+1)}{2(2m+1)^{2}+3m(m+1)}:=c_{m}.
\end{align}
Then we have
\begin{align}\label{410}
    &d_{T V}(W, M_1) \leq O(N^{-1});
\end{align}
and
\begin{align}
\label{014}
     \sup_{k\in \mathbf{Z}}& \Big|\mathbf{P}(W\leq k)-\mathbf{\Phi}_{3}(\frac{k+1/2-\mu}{\sigma})\Big|\leq O(N^{-1}).
\end{align}
\end{theorem}
    We claim that (\ref{410}) is a significant improvement over the bound given in the literature. In fact, Theorem 5.2 of Barbour \cite{barbour1999poisson}  and Example 2.1 of Kumar [18] only attain $O(N^{-1/2})$, where they used the two-parameter compound Poisson distribution and pseudo-binomial distribution to approximate $W$ under $m=1$, respectively; and Theorem 3.1 of \cite{upadhye2018pseudo} by using Poisson approximation, which is of order $O(N^{-1/2})$ under $m\geq 2$.

Define
$T_{i} =\sum\nolimits_{j=(i-1) m+1}^{i m} X_{j}$ for $i=1,2, \cdots, N.$ Let $J^{\prime}=\{1,2, \cdots, N\}$. Hence $T_{1}, T_{2}, \cdots, T_{N}$ are 1-dependent random variables, which means the local  dependence structure is satisfied with
$$A_{i}^{\prime}=\{j:|j-i| \leq 1\} \cap J^{\prime},\quad B_{i}^{\prime}=\{j:|j-i| \leq 2\} \cap J^{\prime},\quad C_{i}^{\prime}=\{j:|j-i| \leq 3\} \cap J^{\prime}.$$
Define $Z := \left\{T_{2 k}, 1 \leq k \leq\lfloor N / 2\rfloor\right\}, N(i,Z)= \left|T_{J^{\prime}\backslash C_{i}^{\prime}}\backslash Z \right|$ where  $\left| A\right|$ stands for the cardinality of $A$ and the specific value of $N(i,Z)$ is $O(N)$. Set $\mathcal{F}(i,Z) = \{1,\cdots,N(i,Z)\}$. Note $T_{J^{\prime}\backslash C_{i}^{\prime}}\backslash Z$ can be written as $\{T_{l_{j}}^{Z},j\in \mathcal{F}(i,Z),l_{1} < l_{2} <\cdots<l_{N(i,Z)}\}$ where $T_{l_{j}}^{Z}$ satisfies $\mathcal{L}(T_{l_{j}}^{Z})  = \mathcal{L}(T_{l_{j}}|Z,X_{C_{i}})$.  Hence these $(T_{l_{j}}^{Z}, j\in \mathcal{F}(i,Z))$ are independent of each other. Fixing $Z=z\in \{0,1\}^{\lfloor N / 2\rfloor}$,  $\mathcal{L}\left(W_{i}^{**} \mid Z=z, X_{C_{i}}\right)$ can be represented as the sum of independent random variables  $\{T_{l_{j}}^{z}, j\in \mathcal{F}(i,z)\}$.  Thus it follows
 $$\big((W_{i})^{**} \mid Z, X_{C_{i}}\big) \overset{d}{=} \Big(\sum\nolimits_{j\in J} T_{l_{j}} \mid Z, X_{C_{i}}\Big)\overset{d}{=} \Big(\sum\nolimits_{j\in J}  T_{l_{j}}^{Z}\Big).$$
 Noting that $D\left(W_{i}^{**}\mid T_{C_{i}^{\prime}}\right) =D(W \mid X_{C_{i}})$, it suffices to control the $D\left(W_{i}^{**}\mid T_{C_{i}^{\prime}}\right)$.
We define
$$\mathcal{D}(X)=\left\|\mathcal{L}(X) *\left(I_{1}-I_{0}\right)\right\|_{\mathrm{TV}}=\sum\nolimits_{k=0}^{\infty}\left|P(X=k+1)-P(X=k)\right|,$$
$$V_{Z}=\sum\nolimits_{j \in  \mathcal{F}(i,Z)}  \left[1 / 2\wedge\big(1-\mathcal{D}(T_{l_{j}}^{Z})/2\big)\right],\quad v_{Z}^{*}=\max_{j \in  \mathcal{F}(i,Z)}\Big\{1 / 2\wedge\big(1-\mathcal{D}(T_{l_{j}}^{Z})/2\big)\Big\}.$$

According to (5.12) of \cite{rollin2008symmetric} and (4.9) of \cite{barbour2002total}, we have
 \begin{align}\label{Fii}
   D\left(W_{i}^{**}\mid T_{C_{i}^{\prime}}\right) &\leq \mathbf{E}\{\mathbf{E}[ D\left(W_{i}^{**}\right)|Z, T_{C_{i}^{\prime}}]\}  \leq 4\mathbf{E} \Big\{1 \wedge \frac{2}{\left(V_{Z}-4  v_{Z}^{*}\right)_{+}}\Big\}=O(N^{-1}).
 \end{align}
  \begin{proof}[Proof of  Theorem \ref{T42}]

  As for (\ref{410}), in this case we get
  $$\lim_{N \rightarrow \infty} \theta_{1} = \frac{m(m+1)b}{2(2m+1)-\left[2(2m+1)^{2}+m(m+1)\right]b}$$
from (\ref{theta}) and (\ref{nmd}). We claim $\lim_{N \rightarrow \infty}\theta_{1} < 1/2$ under the assumption (\ref{726}). In fact,
$$\lim_{N \rightarrow \infty} \theta_{1} = \frac{m(m+1)b}{2(2m+1)-\left[2(2m+1)^{2}+m(m+1)\right]b}.$$
Notice that $f(x) = m(m+1)x\left\{2(2m+1)-[2(2m+1)^{2}+m(m+1)]x\right\}^{-1} $ is monotonic increasing and $f(c_{m})=1/2$. So, $\lim_{N \rightarrow \infty} \theta_{1} = f(b) < f(c_{m}) =1/2.$

Observably,  $\delta\in [0,1)$  and (\ref{Fii}) implies $\sum_{i\in J} \Xi_{i,1}\leq O(N)$. Then substituting  (\ref{Fii})  into Theorem \ref{main} directly yields (\ref{410}).

The proof of \eqref{014} is an immediate result of \eqref{410} and Theorem \ref{Main2}.
\end{proof}
\subsection{$k$-runs}\label{kkuns}
In this subsection, we turn to another special case with $k_{1}=0$ and $k_{2}=k>0$. It is easy to see that the  local dependence structure is satisfied with
$$A_{i} =\{j:|j-i| \leq k\} \cap J,\quad B_{i} =\{j:|j-i| \leq 2k\} \cap J,\quad
C_{i} =\{j:|j-i| \leq 3k\} \cap J.$$

Denote  $W_{N,k}= \sum\nolimits_{j=1}^{N}X_{j}$, which is often termed as $k$-runs.  As before, we suppress the dependence of quantities on $N$ and $k$ in the sequel.  We focus on the case that $p$ and $k$ depend on $N$ with $k \log p \rightarrow -\infty$. The expectation of $W$ is $Np^{k}$, so is $\Gamma_{1}(W)$. We omit the cumbersome calculations and give  asymptotic expressions of the second and third factorial cumulants:
\begin{align}\label{33}
   \Gamma_{2}(W)= \Big[\frac{2p}{1-p}+O(p^k)\Big] Np^{k},\quad \Gamma_{3} (W)=\left[\frac{6p^{2}}{(1-p)^{2}}+O(p^{k})\right]Np^{k}.
\end{align}

From (\ref{111'}) and (\ref{33}), it is easy to obtain
\begin{align}\label{1311}
    r=O(Np^{k}), \quad\quad\quad\quad \bar{p} =O(1), \quad\quad\quad\quad
\lambda=O(Np^{k})
\end{align}

Moreover, by (\ref{33}), and the definition (\ref{theta}) of $\theta_{2}$, $\lim_{N \rightarrow \infty} 2(1-p)\theta_{2}/p =1.$ Hence we assume further $p<1/2$ to guarantee that $\theta_{2}<1/2$. Our result reads.
\begin{theorem}\label{T5}
 Assume $p < 1/2$, $N>10k$ and $k \log p \rightarrow -\infty$ as $N \rightarrow \infty$. Then
\begin{align}\label{zongjie}
    d_{\mathrm{TV}}\left(W, M_2\right) \leq  \left(9 \wedge \frac{95.22(1-2p)}{(N-10 k+8) p^{k}(1-p)^{2}}\right)(2k-1)(4k-3)(6k-5) p^{3}.
\end{align}
\normalsize

\end{theorem}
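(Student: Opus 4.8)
The plan is to apply Theorem~\ref{main}(i) through its smoothness-free form \eqref{iv}: since $\Gamma_2(W)>0$ forces $\mathbb{E}W<\operatorname{Var}W$ once $k$ is large, the target is $M_2=NB(r,\bar p)\ast\mathcal P(\lambda)$, and the fact that $\lambda$ turns out negative (so $M_2$ is only a signed measure) is immaterial, because the inequality $d_{TV}(W,M_2)\le K\Theta_2\sum_{i\in J}\Xi_{i,2}'$ was derived without any sign hypothesis on $\lambda$. First I would feed the cumulants $\Gamma_1(W)=Np^k$, the displayed $\Gamma_2(W)$, and the asymptotic $\Gamma_3(W)$ from \eqref{100} into the parameter relations \eqref{111'}; using the identity $r\bar q+\lambda\bar p=\Gamma_1(W)\Gamma_2(W)/(\Gamma_2(W)+\Gamma_3(W))$ and the limit relation for $\theta_2$ recorded after \eqref{c112} (so that $\theta_2\sim p/[2(1-p)]<\tfrac12$ exactly because $p<\tfrac12$), one obtains $\Theta_2$ of order $(Np^k)^{-1}$ with an explicit $p$-dependent constant.

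The heart of the argument is the estimate of the smoothness-free sum $\sum_{i\in J}\Xi_{i,2}'$. By the wrap-around identification the field $\{X_j\}$ is translation invariant, so $\Xi_{i,2}'$ is independent of $i$ and the sum equals $N\,\Xi_{1,2}'$. Because $\Xi_{i,2}'$ originates from the third-order difference $\Delta^3 g$ in the proof of Theorem~\ref{main}, after the algebraic cancellations established there its dominant contribution is genuinely of fourth order: a bounded combination (the coefficients being $1-\beta_2$ and $|\beta_2|$, both $O(1)$) of sums $\sum\mathbb{E}[X_iX_jX_lX_s]$ in which the three extra indices $j,l,s$ range over the nested neighbourhoods $A_i\subseteq B_i\subseteq C_i$. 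Two facts then finish the estimate: four \emph{distinct} $k$-runs occupy at least $k+3$ consecutive sites, so $\mathbb{E}[X_iX_jX_lX_s]\le p^{k+3}$ since $0<p<1$; and the number of admissible triples $(j,l,s)$ is at most $(2k-1)(4k-3)(6k-5)$, the cardinalities of the three overlap windows of radii $k-1,2(k-1),3(k-1)$. Hence $\Xi_{1,2}'\le C(2k-1)(4k-3)(6k-5)p^{k+3}$, and $\Theta_2\cdot N\,\Xi_{1,2}'$ already exhibits the factor $(2k-1)(4k-3)(6k-5)p^3$.

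For the smoothness constant $K=\sup_iD(W\mid X_{C_i})$ I would mimic Lemmas~\ref{ll33}, \ref{L4} and \ref{L422}. Invoking Lemma~\ref{ll33} to replace $W$ by $W_i^{**}$, I condition on an alternate-block set $Z$ so that $\mathcal L(W_i^{**}\mid Z,X_{C_i})$ becomes a sum of independent coordinates, and then apply inequality (5.12) of \cite{rollin2008symmetric} together with (4.9) of \cite{barbour2002total} to bound the conditional second-difference total variation by the reciprocal of an effective count of non-degenerate coordinates. Since each run occupies $k$ sites, conditioning deletes $O(k)$ blocks, and the surviving count is of order $(N-10k+8)p^k$ (this is where $N>10k$ enters); tracking the constants gives the smoothness branch, while the universal estimate $D\le\mathrm{const}$ gives the competing numerical bound. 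Taking the minimum and multiplying $K$, $\Theta_2$ and $N\Xi_{1,2}'$ yields \eqref{zongjie}.

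I expect two genuine difficulties. The first is to show that the pair- and triple-overlap moments---each individually carrying a \emph{smaller} power of $p$ (namely $p^{k+1}$ or $p^{k+2}$) but potentially large combinatorial weights when $kp$ is small---are exactly annihilated by the identities underlying the definition of $\Xi_{i,2}$, so that the clean fourth-order bound $(2k-1)(4k-3)(6k-5)p^{k+3}$ is legitimate; this bookkeeping, not any single estimate, is the crux. The second is the smoothness step: for $k$-runs the dependence window has width of order $k$ rather than a fixed constant, so the alternate-block construction must be arranged to retain enough independent non-degenerate coordinates to force the effective count up to $(N-10k+8)p^k$. The explicit constants ($9$, $95.22$, and the $(1-2p)$ and $(1-p)^2$ factors) materialize only once these two estimates are assembled.
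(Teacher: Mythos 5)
Your overall architecture coincides with the paper's: apply Theorem~\ref{main}(i) in its smoothness-free form, read off $(r,\bar p,\lambda)$ from the three factorial cumulants (noting that $\lambda<0$ merely makes $M_2$ a signed measure and that $\theta_2\to p/(2(1-p))<1/2$ precisely when $p<1/2$), and bound $\sum_{i}\Xi_{i,2}'$ by $9(2k-1)(4k-3)(6k-5)p^{3}\bar p\,\mathbb{E}W$. The paper asserts this last estimate without detail, and your explanation is the right mechanism: the staircase counts hidden in $\Xi_{i,2}'$ are dominated by falling-factorial products that vanish unless four \emph{distinct} runs are present, so each admissible triple contributes at most $p^{k+3}$ and the combinatorial weight is $(2k-1)(4k-3)(6k-5)$ --- it is this falling-factorial structure (together with $|\beta_2|=\bar q=O(p)$), rather than an ``exact annihilation'' of lower-order moments, that supplies the missing powers of $p$. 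Where you genuinely diverge is the smoothness constant $K=\sup_i D(W\mid X_{C_i})$. The paper does \emph{not} reuse the alternate-block construction of Lemmas~\ref{L4} and~\ref{L422} here: it splits $W=W_1+W_2$ into two halves whose index sets are separated by a gap of $k-1$ sites, so that conditionally on $X_{C_i}$ the two halves are independent and $\mathcal{L}(W\mid X_{C_i})$ is a convolution; it then bounds the \emph{first-order} smoothness $\mathcal{D}(W_j\mid X_{C_{j,i}})\le 2.3/\sqrt{(0.5N-5k+O(1))(1-p)^{3}p^{k}}$ of each half via Lemma~2.1 of \cite{wang2008negative} (Lemma~\ref{llee}), and finally squares the $1/\sqrt{Np^{k}}$ rate into $10.58/((N-10k+8)p^{k}(1-p)^{3})$ using the product inequality $D(X+Y)\le\mathcal{D}(X)\mathcal{D}(Y)$ from (1.2) of \cite{rollin2008symmetric}. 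Your route --- re-blocking into width-$k$ $1$-dependent blocks, conditioning on alternate blocks, and applying (5.12) of \cite{rollin2008symmetric} together with (4.9) of \cite{barbour2002total} directly to the second difference --- yields the same order $O(1/(Np^{k}))$, but it obliges you to lower-bound $\min_z\bigl(1-\mathcal{D}(T^{z})/2\bigr)$ by roughly $kp^{k}$ uniformly over adversarial block-value conditionings on width-$k$ blocks, which is exactly the messier computation the splitting trick avoids; accordingly your constants (the $95.22$ and the $(1-p)^{2}$, $(1-2p)$ factors) would not come out as in \eqref{zongjie}. Both routes are legitimate: the paper's buys explicit constants and a cleaner conditional-independence structure, while yours is more uniform with the treatment of $(1,1)$- and $(k_1,k_2)$-runs.
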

Assume $p$ and $k$ are numeric constants (do not depend on $N$), the gap  between $W$ and $M_2$ using Theorem \ref{T5} is actually $O(N^{-1})$, which is an improvement of Corollary 1.1 of \cite{wang2008negative} that attains  $O(N^{-1/2})$. Moreover, in the case that $p$ and $k$ depend on $N$, Corollary 1.1 of  Wang and Xia \cite{wang2008negative} showed
\begin{align}\label{zj}
  d_{\mathrm{TV}}(W, NB(r^{\prime},p^{\prime})) \leq 4.5(4 k-3)(2 k-1) p^{2}\Big(2 \wedge \frac{4.6}{\sqrt{(N-4 k+2) p^{k}(1-p)^{3}}}\Big),
\end{align}
where $r^{\prime} = \Gamma_{1}(W)^{2}/\Gamma_{2}(W)$, $p^{\prime} = \Gamma_{1}(W)/[\Gamma_{1}(W)+\Gamma_{2}(W)]$.  The error upper bound (\ref{zongjie}) is in general better than (\ref{zj}).

Before proving Theorem \ref{T5}, we introduce some additional notation and give a lemma. Let
$N_{1} = \lfloor (N-k)/2\rfloor, \;W_{1}=\sum\nolimits_{j=1}^{N_{1}}X_{j},\; W_{2}=\sum\nolimits_{j=N_{1}+k}^{N}X_{j}.$ Denote $C_{1,i}=C_{i}\cap \{1,\cdots ,N_{1}\}, \;C_{2,i}=C_{i}\cap \{N_{1}+k,\cdots, N\}.$ Without loss of generality, we assume  $\{N_{1}+1, \cdots, N_{1}+k-1\} \in C_{i}$ and $|C_{1,i}|=2.5k-3$, $|C_{2,i}|=2.5k-2$.
\begin{lemma}\label{llee}
    For $N > 10k$, we have
    \begin{align}\label{a1}
        \mathcal{D}(W_{j}|X_{C_{j,i}}) \leq 1 \wedge \frac{2.3}{\sqrt{\left(0.5N-5k+4\right)\left(1-p\right)^{3} p^{k}}},\quad j=1,2.
    \end{align}
\end{lemma}
\begin{proof}
        We start with the proof of (\ref{a1}) with $j=1$. Note that $X_{C_{1,i}}$ contains $(2.5k-3)$ consecutive elements from $Y_{|N_{1}|-2.5k+2}$ to $Y_{|N_{1}|}$ which determined by $\{\xi_{|N_{1}|-2.5k+2}$, $\xi_{|N_{1}|-2.5k+3}, \cdots, \xi_{|N_{1}|+k-1}\}.$

         Denote $G=\{1,2,\cdots,N_{1}\}$, $H = \{j:|N_{1}|-2.5k+2 \leq j \leq |N_{1}|+k-1\} \cap J$. Define $\gamma_{l}:=\mathbf{P}(\xi_{l}=1\mid X_{C_{1,i}}), l\in J$. It is easy to find for $l \notin H$, $\gamma_{l}=p$.  Using Lemma 2.1 of \cite{wang2008negative}, we obtain
          \begin{align*}
        \mathcal{D}(W_{1}|X_{C_{1,i}}) \leq 1 \wedge \frac{2.3}{\sqrt{\left(0.5N-5k+4\right)\left(1-p\right)^{3} p^{k}}}
    \end{align*}
           By the same token, we complete the proof of (\ref{a1}) with $j=2$.
\end{proof}
\begin{proof}[Proof of Theorem \ref{T5}]
We start with the calculation of $D(W|X_{C_{i}})$.
Notice that $D(W|X_{C_{i}})=D(W_{1}+W_{2}|X_{C_{i}})$ and
$\mathcal{L}(W_{1}+W_{2}|X_{C_{i}})=\mathcal{L}(W_{1}|X_{C_{1,i}})*\mathcal{L}(W_{2}|X_{C_{2,i}}).$
 So by (1.2)  of \cite{rollin2008symmetric} and Lemma \ref{llee} we obtain
\begin{align*}
D(W|X_{C_{i}}) &\leq \mathcal{D}(W_{1}|X_{C_{1,i}}) \mathcal{D}(W_{2}|X_{C_{2,i}})\leq 1 \wedge \frac{10.58}{(N-10k+8) p^{k}(1-p)^{3}}.\notag
\end{align*}
\normalsize

Next, using (\ref{theta}), (\ref{1311}), the definition of $\Xi_{i,,2}^{\prime}$ and observing $\bar{q}\leq 3p\bar{p}/2$ when $p <1/2$ , we get
$$\sum\nolimits_{i=1}^{n}\Theta_{2} \Xi_{i,2}^{\prime} \leq  9(2k-1)(4k-3)(6k-5)\frac{(1-p)p^3}{1-2p}.$$
Finally, we use (i) of Theorem \ref{main} to conclude the proof.
\end{proof}


\bibliographystyle{siamplain}
\bibliography{references}
\end{document}